\newtheorem{theorem}{Theorem}[section]
\newtheorem{lemma}[theorem]{Lemma}
\newtheorem{corollary}[theorem]{Corollary}
\theoremstyle{definition}
\numberwithin{equation}{section}
\numberwithin{table}{section}
\numberwithin{figure}{section}
\title
[A posteriori error estimates by preconditioning]{A posteriori error estimates of finite element methods by preconditioning}\thanks{The work of Zikatanov was supported in part by NSF grants DMS-1720114 and DMS-1819157}
\author{Yuwen Li}
\address{Pennsylvania State University}
\email{yuwli@psu.edu}
\author{Ludmil Zikatanov}
\address{Pennsylvania State University}
\email{ludmil@psu.edu}
\begin{document}

\begin{abstract}
We present a framework that relates preconditioning with a posteriori error estimates in finite element methods. In particular, we use standard tools in subspace correction methods to obtain reliable and efficient error estimators. As a simple example, we recover the classical residual error estimators for the second order elliptic equations.
\end{abstract}

\maketitle
\section{Introduction}
Adaptive finite element methods (AFEMs) have been an active research area since the pioneering work \cite{BR1978}. In contrast to finite elements based on quasi-uniform meshes, AFEMs produce a sequence of locally refined grids that is able to resolve the singularity arising from irregular data in the underlying boundary value problems. Readers are referred to e.g., \cite{BS2001,NSV2009,Verfurth2013} for a thorough introduction. Among the key concepts in AFEMs, a posteriori error estimates are the building block for comparing errors on different elements and marking elements with large errors for refinement.
For details on various AFEM error estimation techniques we refer to works on: explicit residual estimators \cite{Verfurth2013}; implicit estimators based on local problems \cite{BR1978,BankWeiser,CF1999,MNS2003}; recovery-based estimators; \cite{ZZ1992b,CB2002,BX2003a,BX2003b,ZhangNaga2005}; hierarchical basis estimators \cite{bank-acta,BankSmith,LiOvall,HNJ2017}; functional estimators \cite{Repin2008}; and equilibrated estimators  \cite{AO2000,LW2004,Schoberl2008b,Ern2015}.

On the other hand, parallel with the development of AFEMs, there are also substantial research efforts in studying efficient preconditioning, which is a technique for approximating the inverse of a differential operator. Usually, such approximations are aimed at accelerating Krylov subspace iterative methods for solving  linear systems resulting from discretized partial differential equations. Popular techniques used for preconditioning include e.g., multigrid~\cite{MG-book,achi,hackbusch-first,hackbusch-book,yserentant-acta,xz-AMG} and domain decomposition/subspace correction methods~\cite{dryja-widlund,widlund-book,xu-siam-review}. In practice, subspace correction methods provide an efficient way of reducing the condition number of a large-scale but finite-dimensional linear system. However, the analysis of uniform convergence rate of those methods often benefits from the general setting of infinite-dimensional Hilbert spaces (see, for example,~\cite{sergey-fictitious-space,griebel-oswald,xz-jams}).

In this paper we present a general framework relating abstract operator preconditioning~\cite{xu-siam-review,griebel-oswald,xz-jams,LoghinWathen2004,MardalWinther2011} to a posteriori error estimates. In particular, we shall show that such standard techniques for developing preconditioners also yield reliable and efficient error estimators. Here, for clarity of presentation, we focus on the symmetric and positive-definite problems although extensions to more general cases are definitely within reach. As a simple example, with this framework, we are able to recover the classical residual error estimators for elliptic equations in primal form.

The rest of this paper is organized as follows. In section \ref{secpre}, we set up the model variational problem and
define the operator notation which is convenient when constructing preconditioners. In section \ref{secmain}, we develop the main theory on posteriori error estimates via preconditioning. Section \ref{secex} is devoted to the example of second order elliptic equation that illustrates the aforementioned  abstract theory. Concluding remarks are found in Section \ref{secconclusion}.

\section{Preliminaries}\label{secpre}
Let $V$ be a Hilbert space and $V^\prime$ denote the dual space of $V$. Let $a: V\times V\rightarrow\mathbb{R}$ be a continuous bilinear form and $f\in V'$.
We consider the following variational problem: Find $u\in V$ such that for all $v\in V$
\begin{equation}\label{variational}
a(u,v)=\langle f,v\rangle.
\end{equation}
Here $\langle\cdot,\cdot\rangle$ is the duality pairing between $V^{\prime}$ and $V$. Let $\|\cdot\|_V$  denote the norm on $V$ and $\|\cdot\|_{V^\prime}$ the dual norm of $V^\prime$. For simplicity, we assume that the bilinear form $a(\cdot,\cdot)$ is symmetric and positive-definite (SPD). The continuity and positive-definiteness of $a(\cdot,\cdot)$ imply
\begin{subequations}
\begin{align}
&a(v,w)\leq\overline{\alpha}\|v\|_V\|w\|_V,\label{bdd}\\
&a(v,v)\geq\underline{\alpha}\|v\|_V^2,\label{coercive}
\end{align}
\end{subequations}
for all $v, w\in V,$
where $\overline{\alpha}, \underline{\alpha}>0$ are absolute constants.
Such a bilinear form naturally defines a bounded isomophism $A: V\rightarrow V'$ for which we have
\begin{equation*}
\langle A v, w  \rangle:= a(v,w),\quad\forall v,w\in V.
\end{equation*}
Hence,
\eqref{variational} is equivalent to the operator equation
\begin{equation}
    Au=f.
\end{equation}
\eqref{bdd} and \eqref{coercive} imply that $A$ induces the inner product $\langle A\cdot,\cdot\rangle$ on $V$. For all $v\in V,$ the $A$-norm on $V$ is defined as $\|v\|_A:=\langle Av,v\rangle^\frac{1}{2}$, which is equivalent to the $V$-norm.

\subsection{Approximation from a subspace}
Let us consider a general case where we approximate the solution to~\eqref{variational} by restricting it to a subspace $V_h\subset V$, namely:
Find $u_h\in V_h$ such that
\begin{equation}\label{disvar}
a(u_h,v)=\langle f,v\rangle\text{ for all }v\in V_h.
\end{equation}
Note that the subspace  $V_h$ does not even have to be finite dimensional, although it usually is in applications.
It follows from \eqref{bdd}, \eqref{coercive} and the well-known Lax--Milgram theorem that \eqref{disvar} admits a unique solution.

For such a subspace $V_h\subset V$, we consider the natural inclusion $I_h: V_h\hookrightarrow V$ and its adjoint
$Q_h:=I_h': V'\to V_h'$ defined as
\begin{equation*}
\langle Q_hg, v_h \rangle= \langle g, I_hv_h \rangle\quad \text{ for all }g\in V^\prime\text{ and }v_h\in V_h.
\end{equation*}

We introduce the operator $A_h:= Q_hAI_h: V_h\rightarrow V_h'$ which approximates $A$ on $V_h$. In this way, the discrete problem \eqref{disvar} reads
\begin{equation*}
A_hu_h=Q_hf.
\end{equation*}

\section{A posteriori error estimates by preconditioning}\label{secmain}
A posteriori error estimates are of the form
\begin{align*}
    C_1\eta_h\leq\|u-u_h\|_V\leq C_2\eta_h,
\end{align*}
where $C_1, C_2$ are absolute positive constants and $\eta_h$ is computed from $u_h$. In AFEMs, $\eta_h$ is the sum of error indicators on all elements. The local error indicators can be used to compare errors on different elements and those elements with large errors will be refined. In this way, the errors estimated by $\eta_h$ are equidistributed over all elements in the mesh. The optimal computational complexity of AFEMs is often attributed to the aforementioned equidistribution of errors. Rigorous analysis of convergence and optimality of AFEMs can be found in e.g., \cite{Dorfler1996,MNS2000,BDD2004,Stevenson2007,CKNS2008}.

\subsection{Links with operator preconditioning}
Let
\begin{align*}
    e&:=u-u_h,\\
    r&:={f-Au_h}\in V^\prime.
\end{align*}
Clearly, from our discusion above, it follows that constructing {a posteriori} error estimators is equivalent to estimating a norm of the error $e=A^{-1}r$ by computable bounds. We note, however, that a direct computation of the norm of $A^{-1}r$ will be, in general, impossible or too expensive, 
since one needs to compute the action of $A^{-1}$ on $r$. As we pointed out in the introduction,
approximating such action has been also studied for several decades and is known as as \emph{preconditioning}.
Following this simple observation we now borrow some simple ideas from this field and apply them in constructing a posteriori error estimators.

First, we need a bounded isomorphism (the \emph{preconditioner}) $B: V'\to V$,  whose particular form will be given later. For the time being we only assume that $B$ is bounded and SPD, i.e., $\langle\cdot,B\cdot\rangle$ is an inner product on $V^\prime$. Let $S: V'\to V$ be a SPD operator, which we will refer to as ``the \emph{smoother}" and is such that its range approximates well the high frequency part of the range of $A^{-1}$, i.e., the result of the action $Sr$ provides a good approximation to the high frequency components of the error. Now, a simple choice for $B$ is
\begin{equation*}
B:=S + I_h A_h^{-1}Q_h,
\end{equation*}
which is known as \emph{additive Schwarz preconditioner}. Just to simplify the presentation, we will not consider the multiplicative preconditioner in this paper although following the abstract framework  developed in~\cite{griebel-oswald,xz-jams} similar results can also be obtained in the multiplicative case as well. Let $\underline{\beta}, \overline{\beta}$ be two positive absolute constants. We say that $B$ is a preconditioner for $A$ provided there exist constants
$\underline{\beta}>0$ and $\overline{\beta}<\infty$, such that
\begin{equation}\label{BA}
\underline{\beta}\langle B^{-1}v,v\rangle\leq\langle Av,v\rangle\leq \overline{\beta}\langle B^{-1}v,v\rangle,\quad\forall v\in V.
\end{equation}
The inequality \eqref{BA} is known as \emph{spectral equivalence}, or \emph{norm equivalence},
and is a common ingredient in the analysis of convergence of iterative methods for large-scale linear systems.

\subsection{Estimating  the residual}
We now show that the norm (spectral) equivalence \eqref{BA} naturally yields a two-sided estimate on $\|e\|_A$. This is  the central result in this paper.
\begin{theorem}\label{mainthm}
Let \eqref{BA} hold. Then we have the following two sided bound
\begin{equation*}
\overline{\beta}^{-1}\langle r,Sr\rangle\leq \|e\|^2_A\leq \underline{\beta}^{-1}\langle r,Sr\rangle.
\end{equation*}
\end{theorem}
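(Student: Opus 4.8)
The plan is to reduce the statement to three ingredients: a representation of the energy norm of the error as a dual pairing involving $A^{-1}$, an inversion of the spectral equivalence \eqref{BA}, and a Galerkin-orthogonality argument that collapses the full preconditioner $B$ to the smoother $S$ when it is applied to the residual $r$.

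First I would record the identity $e=A^{-1}r$: since $Ae=Au-Au_h=f-Au_h=r$, we obtain $\|e\|_A^2=\langle Ae,e\rangle=\langle r,A^{-1}r\rangle$. It therefore suffices to bound $\langle r,A^{-1}r\rangle$ from above and below by multiples of $\langle r,Sr\rangle$.

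Second, I would invert the spectral equivalence \eqref{BA}. To avoid taking square roots of operators acting between $V$ and $V'$, I would use the dual (supremum) representation: for any SPD isomorphism $M\colon V\to V'$ and any $r\in V'$, one has $\langle r,M^{-1}r\rangle=\sup_{0\neq v\in V}\langle r,v\rangle^2/\langle Mv,v\rangle$, which follows from Cauchy--Schwarz in the inner product $\langle M\cdot,\cdot\rangle$ with equality attained at $v=M^{-1}r$. Applying this with $M=A$ and with $M=B^{-1}$ and substituting the bounds \eqref{BA} into the denominators yields $\overline{\beta}^{-1}\langle r,Br\rangle\leq\langle r,A^{-1}r\rangle\leq\underline{\beta}^{-1}\langle r,Br\rangle$ for every $r\in V'$.

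Third, and this is the step I expect to be the crux, I would show that $\langle r,Br\rangle=\langle r,Sr\rangle$ for this particular residual. Here I invoke Galerkin orthogonality: from \eqref{disvar}, $\langle r,v_h\rangle=\langle f,v_h\rangle-a(u_h,v_h)=0$ for all $v_h\in V_h$. By the definition of $Q_h$ this says exactly that $Q_hr=0$, so the coarse-correction term of the additive Schwarz preconditioner vanishes, $I_hA_h^{-1}Q_hr=0$, and hence $Br=Sr$. Substituting $\langle r,Br\rangle=\langle r,Sr\rangle$ into the inverted spectral equivalence then completes the proof. The point worth emphasizing is precisely this collapse: the abstract argument naturally produces bounds in terms of the non-computable quantity $\langle r,Br\rangle$, and it is only the orthogonality of the residual to $V_h$ that replaces the coarse solve by the readily computable smoother term $\langle r,Sr\rangle$.
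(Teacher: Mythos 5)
Your proof is correct, and it has the same two-part skeleton as the paper's: first establish $\overline{\beta}^{-1}\langle r,Br\rangle\leq\|e\|_A^2\leq\underline{\beta}^{-1}\langle r,Br\rangle$, then use Galerkin orthogonality ($Q_hr=0$, hence $Br=Sr$) to collapse the preconditioner to the smoother --- your third step is verbatim the paper's ending, and you correctly identify it as the crux. Where you differ is the mechanism for the first part. The paper works pointwise at the error: it applies Cauchy--Schwarz in the $A$-inner product to the pair $(e,BAe)$, getting $\langle Ae,BAe\rangle^2\leq\langle Ae,e\rangle\langle ABAe,BAe\rangle$, and then uses \eqref{BA} on the vector $BAe$ to absorb the second factor, yielding $\langle r,Br\rangle\leq\overline{\beta}\langle Ae,e\rangle$ (and symmetrically for the other side). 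You instead invoke the dual representation $\langle r,M^{-1}r\rangle=\sup_{0\neq v\in V}\langle r,v\rangle^2/\langle Mv,v\rangle$ and substitute \eqref{BA} into the denominator, which proves the general operator fact that spectral equivalence inverts with the constants swapped, i.e. $\overline{\beta}^{-1}\langle g,Bg\rangle\leq\langle g,A^{-1}g\rangle\leq\underline{\beta}^{-1}\langle g,Bg\rangle$ for \emph{every} $g\in V'$, not just for $g=r$. The two arguments are of comparable length and both reduce to Cauchy--Schwarz (your sup formula is itself proved by it), so neither is more elementary; what your route buys is a cleaner separation of concerns and a reusable statement about inverses of spectrally equivalent operators, while the paper's route avoids introducing the variational characterization and manipulates only the concrete vectors $e$ and $BAe$.
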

\begin{proof}
Since $A$ is SPD, we use the Cauchy--Schwarz inequality to obtain
\begin{equation}\label{e:2}
\langle Ae, BAe\rangle^2\le
\langle Ae,e\rangle
\langle ABAe, BAe\rangle.
\end{equation}
The inequality~\eqref{BA} implies
\begin{equation}\label{e:3}
\langle A BAe, BAe \rangle \leq\overline{\beta}\langle B^{-1} BAe, BAe\rangle =
\overline{\beta}\langle Ae, BAe\rangle.
\end{equation}
Combining \eqref{e:2} and \eqref{e:3} yields
\begin{equation*}
\langle r, Br\rangle=\langle Ae, BAe\rangle\le
\overline{\beta}\langle Ae,e\rangle,
\end{equation*}
where we used $r=Ae$ in the first equality. The upper bound
\begin{equation*}
\langle Ae,e\rangle\leq \underline{\beta}^{-1}\langle r, Br\rangle
\end{equation*}
can be shown in a similar fashion. In summary, we have
\begin{equation}\label{BrAr}
\overline{\beta}^{-1}\langle r, Br\rangle\leq\langle Ae,e\rangle\leq \underline{\beta}^{-1}\langle r, Br\rangle.
\end{equation}
On the other hand, for any $v_h\in V_h$, \eqref{disvar} implies  \begin{align*}
    \langle Q_hr,v_h\rangle=\langle r,v_h\rangle=\langle f,v_h\rangle-\langle A_hu_h,v_h\rangle=0,
\end{align*}
i.e., $Q_hr=0$. Hence,
\begin{equation}\label{BrSr}
    Br=Sr+I_hA_h^{-1}(Q_hr)=Sr.
\end{equation}
Combining \eqref{BrAr} and \eqref{BrSr} completes the proof.
\end{proof}

Throughout the rest of this paper, $\langle r,Sr\rangle$ will serve as a (nearly) computable a posteriori error estimator that is proved to be both an upper and lower bound of the error $\|e\|_A$.
In order to derive an error estimator within our framework, the key step is to suitably select the smoother $S$ such that the spectral equivalence \eqref{BA} holds.

\subsection{Additive Schwarz smoother}
In this subsection, we construct a particular $S$ using the additive Schwarz method. For such a smoother, we present a lemma that serves as a criterion for verifying \eqref{BA}.

For $n\in\mathbb{N}$, $1\leq k\leq n$, let $V_k \subset V$ be subspaces providing a decomposition of $V$, namely,
\begin{equation}
V=\sum_{k=1}^{n} V_k.
\end{equation}
Let $I_k:V_k\hookrightarrow V$ be the natural inclusion and
$Q_k: V'\to V_k'$ denote its adjoint. We further set $A_k:=Q_k A I_k$.
Next, let $S_k: V_k^\prime\rightarrow V_k$ be spectrally equivalent to $A_k^{-1}$. More precisely, for $1\leq k\leq n$ and $v_k\in V_k$, we assume that
\begin{equation}\label{e:sk-bounds}
\underline{\gamma}\langle S^{-1}_kv_k,v_k\rangle\leq \langle A_kv_k,v_k\rangle\leq \overline{\gamma}\langle S^{-1}_kv_k,v_k\rangle,
\end{equation}
where $\underline{\gamma}, \overline{\gamma}$ are positive absolute constants. The smoother $S$ (additive Schwarz method) is then defined to be
\begin{equation*}
S:=\sum_{k=1}^n I_k S_k Q_k.
\end{equation*}
By the definition of $B$, we obtain
\begin{align*}
B=I_hA_h^{-1}Q_h+\sum_{k=1}^nI_kS_kQ_k.
\end{align*}
The norm of $B$ can be estimated using the following lemma, which can be found in e.g.,  \cite{griebel-oswald,xz-jams,widlund-book,BS2008}.
\begin{lemma}\label{l:b-inverse}
We have the following identity
\begin{eqnarray*}
\langle B^{-1} v, v\rangle = \inf_{v_h+\sum_{k=1}^n v_k=v}\langle A_hv_h,v_h\rangle+\sum_{k=1}^n\langle S_k^{-1}v_k,v_k\rangle,
\end{eqnarray*}
where the infimum is taken over $v_h\in V_h$ and $ v_k\in V_k$ for $1\leq k\leq n$.
\end{lemma}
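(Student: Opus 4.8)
The plan is to recast $B$ as a single operator acting on a product (``fictitious'') space and then prove the minimization identity abstractly. First I would introduce the product space $W:=V_h\times V_1\times\cdots\times V_n$ together with the surjection $R:W\to V$ given by $R(w_h,w_1,\dots,w_n):=I_hw_h+\sum_{k=1}^n I_kw_k$, whose surjectivity is exactly the decomposition hypothesis $V=\sum_{k=1}^n V_k$. The block-diagonal operator $\mathcal{D}:W\to W'$ defined by $\mathcal{D}(w_h,w_1,\dots,w_n):=(A_hw_h,S_1^{-1}w_1,\dots,S_n^{-1}w_n)$ is SPD, and a direct check of the adjoint $R'g=(Q_hg,Q_1g,\dots,Q_ng)$ shows $R\mathcal{D}^{-1}R'=I_hA_h^{-1}Q_h+\sum_{k=1}^n I_kS_kQ_k=B$. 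With these identifications the right-hand side of the lemma is precisely $\inf_{Rw=v}\langle\mathcal{D}w,w\rangle$, since $\langle\mathcal{D}w,w\rangle=\langle A_hw_h,w_h\rangle+\sum_{k=1}^n\langle S_k^{-1}w_k,w_k\rangle$ and the constraint $Rw=v$ is the admissibility condition $w_h+\sum_{k=1}^n w_k=v$.

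It then remains to prove the abstract identity $\langle B^{-1}v,v\rangle=\inf_{Rw=v}\langle\mathcal{D}w,w\rangle$ for a surjective $R$ and SPD $\mathcal{D}$ with $B=R\mathcal{D}^{-1}R'$. I would argue by two inequalities using the distinguished decomposition $w^\ast:=\mathcal{D}^{-1}R'B^{-1}v$. One checks $Rw^\ast=R\mathcal{D}^{-1}R'B^{-1}v=BB^{-1}v=v$, so $w^\ast$ is admissible, and, using $\mathcal{D}w^\ast=R'B^{-1}v$ together with the adjoint relation, $\langle\mathcal{D}w^\ast,w^\ast\rangle=\langle R'B^{-1}v,w^\ast\rangle=\langle B^{-1}v,Rw^\ast\rangle=\langle B^{-1}v,v\rangle$; this yields the ``$\le$'' bound and shows the infimum is attained at $w^\ast$. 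For the reverse bound, take any admissible $w$ and write $\langle B^{-1}v,v\rangle=\langle B^{-1}v,Rw\rangle=\langle R'B^{-1}v,w\rangle=\langle\mathcal{D}w^\ast,w\rangle$; the Cauchy--Schwarz inequality in the $\mathcal{D}$-inner product gives $\langle\mathcal{D}w^\ast,w\rangle\le\langle\mathcal{D}w^\ast,w^\ast\rangle^{1/2}\langle\mathcal{D}w,w\rangle^{1/2}=\langle B^{-1}v,v\rangle^{1/2}\langle\mathcal{D}w,w\rangle^{1/2}$, and dividing by $\langle B^{-1}v,v\rangle^{1/2}$ produces $\langle B^{-1}v,v\rangle\le\langle\mathcal{D}w,w\rangle$. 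Taking the infimum over admissible $w$ closes the argument.

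The main obstacle is not the algebra above but the functional-analytic bookkeeping needed to make every step legitimate when the spaces are infinite dimensional: one must ensure that $B=R\mathcal{D}^{-1}R'$ is a genuine isomorphism so that $B^{-1}$ exists, which follows from the surjectivity of $R$ together with the SPD property of $\mathcal{D}$ but requires $R$ to have closed range and the $V_k$ to be closed subspaces, and one must justify that the infimum is well defined. Since $w^\ast$ is exhibited explicitly, attainment is automatic once $B^{-1}v$ makes sense, so the crux reduces to verifying that $R$ has closed range and that $\mathcal{D}$ is boundedly invertible; the latter is immediate from the spectral equivalences \eqref{e:sk-bounds}, which guarantee that each $S_k^{-1}$ is bounded above and below. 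I would handle the remaining point either by assuming, as is standard in this setting, that all subspaces are closed and the decomposition is stable, or by noting that in the finite-dimensional case relevant to finite element methods these hypotheses hold trivially.
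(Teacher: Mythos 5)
Your proof is correct, but note that the paper does not prove this lemma at all: it states it as a known identity and delegates the proof to the cited references (Griebel--Oswald, Xu--Zikatanov, Toselli--Widlund, Brenner--Scott). What you have written is essentially the canonical argument from exactly those sources, namely the fictitious space lemma specialized to the additive Schwarz operator: product space $W=V_h\times V_1\times\cdots\times V_n$, surjection $R$, block-diagonal SPD operator $\mathcal{D}$, the verification $B=R\mathcal{D}^{-1}R'$, and then two inequalities driven by the distinguished decomposition $w^\ast=\mathcal{D}^{-1}R'B^{-1}v$: admissibility plus $\langle\mathcal{D}w^\ast,w^\ast\rangle=\langle B^{-1}v,v\rangle$ shows the infimum is at most $\langle B^{-1}v,v\rangle$ (and is attained), while Cauchy--Schwarz in the $\mathcal{D}$-inner product gives the reverse bound for every admissible $w$. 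The algebra checks out: $R'g=(Q_hg,Q_1g,\dots,Q_ng)$ is the correct adjoint, $R\mathcal{D}^{-1}R'=I_hA_h^{-1}Q_h+\sum_{k=1}^nI_kS_kQ_k=B$, and the division by $\langle B^{-1}v,v\rangle^{1/2}$ only requires disposing of the trivial case $v=0$ separately. Your closing caveats are also the right ones: the identity presupposes that $B^{-1}$ exists (the paper sidesteps this by assuming outright that the preconditioner $B$ is a bounded SPD isomorphism) and that each $S_k$ is invertible (implicit in the notation of \eqref{e:sk-bounds}); in the finite element application all spaces are finite dimensional and these points are immediate. Some of the cited references run the same computation without introducing $W$ explicitly (working directly with the decomposition $v_h=A_h^{-1}Q_hB^{-1}v$, $v_k=S_kQ_kB^{-1}v$), but that difference is purely cosmetic; your product-space packaging is, if anything, the cleaner and more general formulation.
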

The proof that $B$ is a good preconditioner for $A$ is standard.
We include it here for completeness and we follow the proof in~\cite{xz-jams}.
\begin{lemma}\label{cond}
For each $k$,
let
\[
  \mathcal{M}(k):=\{j: \sup_{v_j\in V_j, v_k\in V_k} a(v_j,v_k)\neq0\},
\]
and $M:=\max_{1\le k\le n}\#\mathcal{M}(k)$.
In addition, assume that for all $v\in V$, there exist $v_h\in V_h$ and  $v_k\in V_k$ with $1\leq k\leq n$ satisfying
\begin{equation}\label{stable}
    \|v_h\|_A^2+\sum_{k=1}^n\|v_k\|_A^2\leq C_{\emph{stab}}\|v\|^2_A,\quad v=v_h+\sum_{k=1}^nv_k.
\end{equation}
Then \eqref{BA} holds with constants $\overline{\beta}=2\max(1,\overline{\gamma}M)$, $\underline{\beta}=\min(1,\underline{\gamma})C^{-1}_{\emph{stab}}$.
\end{lemma}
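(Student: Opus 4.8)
The plan is to use Lemma~\ref{l:b-inverse}, which recasts $\langle B^{-1}v,v\rangle$ as an infimum over all splittings $v=v_h+\sum_{k=1}^n v_k$, and to verify the two inequalities in~\eqref{BA} separately. A preliminary observation simplifies everything: since $I_h$ and $I_k$ are inclusions, $\langle A_h v_h,v_h\rangle=\langle A I_h v_h,I_h v_h\rangle=\|v_h\|_A^2$ and likewise $\langle A_k v_k,v_k\rangle=\|v_k\|_A^2$. Thus the quantities appearing in~\eqref{stable} and in Lemma~\ref{l:b-inverse} become directly comparable through~\eqref{e:sk-bounds}. This is the standard additive Schwarz argument, which I would reproduce in the present operator notation.

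For the lower bound $\underline{\beta}\langle B^{-1}v,v\rangle\le\langle Av,v\rangle$, equivalently $\langle B^{-1}v,v\rangle\le\underline{\beta}^{-1}\|v\|_A^2$, I would bound the infimum from above by evaluating it at the \emph{particular} stable decomposition guaranteed by~\eqref{stable}. The left inequality of~\eqref{e:sk-bounds} gives $\langle S_k^{-1}v_k,v_k\rangle\le\underline{\gamma}^{-1}\|v_k\|_A^2$, so that $\langle A_h v_h,v_h\rangle+\sum_k\langle S_k^{-1}v_k,v_k\rangle\le\max(1,\underline{\gamma}^{-1})\bigl(\|v_h\|_A^2+\sum_k\|v_k\|_A^2\bigr)\le\max(1,\underline{\gamma}^{-1})C_{\mathrm{stab}}\|v\|_A^2$. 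Since $[\max(1,\underline{\gamma}^{-1})]^{-1}=\min(1,\underline{\gamma})$, inverting yields $\underline{\beta}=\min(1,\underline{\gamma})C_{\mathrm{stab}}^{-1}$. This direction is routine once the stable splitting is in hand.

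The upper bound $\langle Av,v\rangle\le\overline{\beta}\langle B^{-1}v,v\rangle$ is the crux and is where the overlap constant $M$ enters. Here I would fix an \emph{arbitrary} decomposition $v=v_h+w$ with $w=\sum_k v_k$ and expand $\|v\|_A^2=a(v_h,v)+a(w,v)\le(\|v_h\|_A+\|w\|_A)\|v\|_A$ by Cauchy--Schwarz in the $A$-inner product, giving $\|v\|_A\le\|v_h\|_A+\|w\|_A$. The main obstacle is estimating $\|w\|_A^2=\sum_{k,j}a(v_k,v_j)$: the cross terms $a(v_k,v_j)$ vanish unless $j\in\mathcal{M}(k)$, and using $2a(v_k,v_j)\le\|v_k\|_A^2+\|v_j\|_A^2$ together with the symmetry $j\in\mathcal{M}(k)\Leftrightarrow k\in\mathcal{M}(j)$ collapses the double sum to $\|w\|_A^2\le M\sum_k\|v_k\|_A^2$. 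This finite-overlap (``strengthened Cauchy--Schwarz'') step is the only place where the combinatorial structure of the decomposition is used, and it is the part requiring the most care.

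Combining these pieces, $\|v\|_A^2\le 2\|v_h\|_A^2+2\|w\|_A^2\le 2\|v_h\|_A^2+2M\sum_k\|v_k\|_A^2$, and the right inequality of~\eqref{e:sk-bounds}, namely $\|v_k\|_A^2\le\overline{\gamma}\langle S_k^{-1}v_k,v_k\rangle$, turns this into $\|v\|_A^2\le 2\max(1,\overline{\gamma}M)\bigl(\langle A_h v_h,v_h\rangle+\sum_k\langle S_k^{-1}v_k,v_k\rangle\bigr)$. Since this holds for every decomposition, I would pass to the infimum and invoke Lemma~\ref{l:b-inverse} to conclude $\|v\|_A^2\le 2\max(1,\overline{\gamma}M)\langle B^{-1}v,v\rangle$, i.e.\ $\overline{\beta}=2\max(1,\overline{\gamma}M)$. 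The two bounds together establish~\eqref{BA}.
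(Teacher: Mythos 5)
Your proposal is correct and follows essentially the same route as the paper: the lower bound is obtained by evaluating the infimum in Lemma~\ref{l:b-inverse} at the stable decomposition from \eqref{stable}, and the upper bound by bounding an arbitrary decomposition via $\|v\|_A^2\le 2\|v_h\|_A^2+2\|\sum_k v_k\|_A^2$, the finite-overlap estimate $\sum_{j,k}a(v_j,v_k)\le M\sum_k\|v_k\|_A^2$, and the spectral bounds \eqref{e:sk-bounds}, before passing to the infimum. The constants and all key steps match the paper's argument.
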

\begin{proof}
For $v\in V$, assume the decomposition $v=v_h+\sum_{k=1}^n v_k$ with $v_h\in V_h$, $v_k\in V_k$. Direct calculation shows that
\begin{equation}\label{total}
\begin{aligned}
\|v\|_A^2&\leq2\|v_h\|_A^2+2\left\|\sum_{k=1}^nv_k\right\|_A^2\\
&=2\|v_h\|_A^2+2\sum_{j,k=1}^n\langle Av_j,v_k\rangle.
\end{aligned}
\end{equation}
The definition of $\mathcal{M}(k)$ and $M$ implies
\begin{align*}
&\sum_{j,k=1}^n\langle Av_j,v_k\rangle=
\sum_{k=1}^n
\sum_{j\in\mathcal{M}(k)}a(v_j,v_k)\\
&\quad\leq
\frac{1}{2}\sum_{k=1}^n
\sum_{j\in \mathcal{M}(k)}\|v_j\|_A^2+\|v_k\|_A^2\leq M \sum_{k=1}^n\|v_k\|_A^2.
\end{align*}
Combining the previous estimate with \eqref{total} and \eqref{e:sk-bounds} gives
\begin{equation}
\begin{aligned}
\|v\|_A^2&\leq2\langle A_hv_h,v_h\rangle+2M\sum_{k=1}^n\langle Av_k,v_k\rangle\\
&\leq2\max(1,\overline{\gamma}M)\left(\langle A_hv_h,v_h\rangle+\sum_{k=1}^n\langle S_k^{-1}v_k,v_k\rangle\right).
\end{aligned}
\end{equation}
Taking the infimum with respect to all decompositions and using Lemma \ref{l:b-inverse}, we obtain the upper bound
\begin{equation*}
    \|v\|_A^2\leq2\max(1,\overline{\gamma}M)\langle B^{-1}v,v\rangle.
\end{equation*}

For the lower bound in \eqref{BA}, let $v=v_h+\sum_{k=1}^nv_k$ be the decomposition that satisfies \eqref{stable}.
It then follows from \eqref{e:sk-bounds} and \eqref{stable} that
  \begin{align*}
  &\langle A_hv_h,v_h\rangle+\sum_{k=1}^n\langle S_k^{-1}v_k,v_k\rangle\leq \|v_h\|_A^2+\sum_{k=1}^n\underline{\gamma}^{-1}\langle A_kv_k,v_k\rangle\\
  &\quad\leq\max(1,\underline{\gamma}^{-1})\left(\|v_h\|_A^2+
  \sum_{k=1}^{n}\|v_k\|^2_{A}\right)\leq\max(1,\underline{\gamma}^{-1})C_{\text{stab}}\|v\|_A^2.
\end{align*}
Using the previous estimate and Lemma \ref{l:b-inverse}, we obtain
  \begin{align*}
  &\langle B^{-1}v,v\rangle\leq\max(1,\underline{\gamma}^{-1})C_{\text{stab}}\|v\|_A^2.
\end{align*}
The proof is complete.
\end{proof}

\section{Examples}\label{secex}
In this section, we consider the typical example of a scalar elliptic equation. Let $V=H^{1}_0(\Omega)$ where $\Omega\subset \mathbb{R}^d$ is a Lipschitz polytope.
For a given $f\in L^2(\Omega)$ and $K\in [W^{1}_{\infty}(\Omega)]^{d\times d}$, the bilinear and linear forms in equation~\eqref{variational} are:
\begin{equation*}
a(u,v):=\int_\Omega K\nabla u\cdot\nabla vdx, \quad \langle f, v\rangle:=\int_\Omega fvdx.
\end{equation*}
In addition, we assume $K$ is piecewise constant and uniformly elliptic, i.e.,
\begin{equation*}
    \underline{\alpha}|\xi|^2\leq \xi^TK(x)\xi\leq\overline{\alpha}|\xi|^2,\quad\forall\xi\in\mathbb{R}^n, x\in\Omega.
\end{equation*}
Hence, \eqref{bdd} and \eqref{coercive} holds.

Let $\mathcal{T}_h$ be a conforming and shape-regular simplicial partition of $\Omega$ aligned with discontinuities of $K$. Let $\mathcal{P}_p(D)$ denote the set of polynomials of degree at most $p$ on a domain $D$.
The subspace $V_h\subset V$ is
$$
V_h=:\{v_h\in V: v_h|_T\in\mathcal{P}_p(T)\text{ for all }T\in\mathcal{T}_h\},
$$
where $p\geq1$ is an integer.

Let $\{x_k\}_{k=1}^n$ denote the set of vertices in $\mathcal{T}_h$. For each $x_k$, let $\phi_k$ denote the continuous piecewise linear function that takes the value $1$ at $x_k$ and $0$ at other vertices. Furthermore, we denote ${\Omega}_k:= \operatorname{supp}\phi_k$ for $1\leq k\leq n$.
Obviously we have
\begin{align}
&\Omega=\bigcup_{k=1}^n \Omega_k,\quad \sum_{k=1}^n\phi_k(x)=1, \label{e:phi-1}\\
&\|\nabla \phi_k\|_{L^\infty(\Omega)}\eqsim h_k^{-1}:=(\text{diam}\Omega_k)^{-1}.\label{gradphi}
\end{align}

\subsection{A posteriori error estimates for Lagrange elements}
Now, let $V_k=H_0^1(\Omega_k)$ which is a subspace of $V=H_0^1(\Omega)$ by zero extension. The partition of unity \eqref{e:phi-1} implies
\begin{equation*}
V=\sum_{k=1}^{n} V_k.
\end{equation*}
We note that the framework also works for other local patches, as long as their union covers $\Omega$.

For a fixed $k$, the set $\mathcal{M}(k)$ defined in Lemma \ref{cond} translates into
$$
\mathcal{M}(k)=\{j: \Omega_k\cap\Omega_j \neq\emptyset\}.
$$
In this case, $M=\max_{1\le k\le n}\#\mathcal{M}(k)$ is an absolute constant by the shape-regularity of $\mathcal{T}_h$.

Throughout the rest of this paper, we adopt the notation $C_1\lesssim C_2$ provided $C_1\leq C_3C_2$ with $C_3$ being a generic constant dependent only on $K$ and $M$. We say $C_1\eqsim C_2$ provided $C_1\lesssim C_2$ and $C_2\lesssim C_1$. Given an element $T$ and a face $e$, let $h_T$ and $h_e$ denote the diameter of $T$ and $e$, respectively. The shape-regularity of $\mathcal{T}_h$ implies that
\(h_k\eqsim h_T\eqsim h_e\) if $x_k\in\overline{T}\cap\overline{e}$ and  we will use these notions interchangeably.

We set $S_k:=A_k^{-1}$ and thus $\overline{\gamma}=\underline{\gamma}=1$ in \eqref{e:sk-bounds}. The corresponding smoother $S$ yields an error estimator.
In order to show the reliability and efficiency, we need to verify \eqref{stable} in Lemma \ref{cond}.
\begin{corollary}\label{maincor}
We have the following estimate
\begin{equation*}
\|e\|^2_A\eqsim\sum_{k=1}^n\langle Q_kr,A_k^{-1}Q_kr\rangle.
\end{equation*}
\end{corollary}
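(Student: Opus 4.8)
The plan is to read off the two-sided bound directly from Theorem~\ref{mainthm} and Lemma~\ref{cond}, so that the only genuine work is verifying the stable splitting~\eqref{stable} for the patch decomposition $V=\sum_k V_k$ with $V_k=H^1_0(\Omega_k)$. First I would identify the estimator with $\langle r,Sr\rangle$. Using $S=\sum_k I_kS_kQ_k$, the choice $S_k=A_k^{-1}$, and the adjoint relation $\langle r,I_kw\rangle=\langle Q_kr,w\rangle$ for $w\in V_k$, one has
\[
\langle r,Sr\rangle=\sum_{k=1}^n\langle r,I_kA_k^{-1}Q_kr\rangle=\sum_{k=1}^n\langle Q_kr,A_k^{-1}Q_kr\rangle,
\]
so the claimed quantity is exactly $\langle r,Sr\rangle$, and Theorem~\ref{mainthm} yields $\overline\beta^{-1}\langle r,Sr\rangle\le\|e\|_A^2\le\underline\beta^{-1}\langle r,Sr\rangle$. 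It then suffices to show that the spectral equivalence~\eqref{BA} holds with $\overline\beta\eqsim1$ and $\underline\beta\eqsim1$.

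Since $S_k=A_k^{-1}$ gives $\underline\gamma=\overline\gamma=1$ in~\eqref{e:sk-bounds}, Lemma~\ref{cond} produces $\overline\beta=2\max(1,M)$ and $\underline\beta=C_{\mathrm{stab}}^{-1}$. The efficiency direction $\langle r,Sr\rangle\lesssim\|e\|_A^2$ is immediate: as already noted for $\mathcal M(k)$, the quantity $M$ is an absolute constant by shape-regularity of $\mathcal T_h$, so $\overline\beta^{-1}\eqsim1$, and this uses only the finite-overlap part of Lemma~\ref{cond}, not~\eqref{stable}. Hence the entire difficulty is the reliability direction, i.e.\ exhibiting a decomposition satisfying~\eqref{stable} with $C_{\mathrm{stab}}$ depending only on $K$ and $M$.

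The naive attempt $v_h=0$, $v_k=\phi_k v$ (which reproduces $v$ by the partition of unity~\eqref{e:phi-1}) fails, and seeing why pinpoints the main obstacle: the product rule $\nabla(\phi_k v)=\phi_k\nabla v+v\nabla\phi_k$ combined with~\eqref{gradphi} generates the term $\sum_k h_k^{-2}\|v\|_{L^2(\Omega_k)}^2$, for which no uniform Poincaré bound is available on interior patches. This is precisely what the coarse component $v_h\in V_h$ must cure. I would therefore take a Clément/Scott--Zhang quasi-interpolant $\Pi_h\colon V\to V_h$ (legitimate since $p\ge1$ forces the continuous piecewise linears into $V_h$, and $\Pi_h$ can be built to preserve the homogeneous boundary condition), set $v_h=\Pi_h v$, and split the remainder by the partition of unity,
\[
v=\Pi_h v+\sum_{k=1}^n\phi_k(v-\Pi_h v),\qquad v_k:=\phi_k(v-\Pi_h v)\in H^1_0(\Omega_k).
\]

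To close~\eqref{stable} I would invoke two standard properties of $\Pi_h$: the global $H^1$-stability $\|\Pi_h v\|_A\lesssim\|v\|_A$, which controls $\|v_h\|_A^2$; and the local estimate $\|v-\Pi_h v\|_{L^2(\Omega_k)}\lesssim h_k\|\nabla v\|_{L^2(\widetilde\Omega_k)}$ on a slightly enlarged patch $\widetilde\Omega_k$. Writing $w=v-\Pi_h v$ and expanding $\nabla(\phi_k w)$, the first resulting term obeys $\sum_k\int_{\Omega_k}\phi_k^2|\nabla w|^2\lesssim\|\nabla w\|_{L^2(\Omega)}^2$ because $0\le\phi_k\le1$ gives $\sum_k\phi_k^2\le\sum_k\phi_k=1$, while the second term is $\sum_k\int_{\Omega_k}w^2|\nabla\phi_k|^2\eqsim\sum_k h_k^{-2}\|w\|_{L^2(\Omega_k)}^2\lesssim\sum_k\|\nabla v\|_{L^2(\widetilde\Omega_k)}^2$ by the approximation estimate. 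The finite-overlap bound $M$ then reduces the last sum to $\lesssim\|\nabla v\|_{L^2(\Omega)}^2$, and the uniform ellipticity of $K$ converts all $H^1$-seminorms into $A$-norms up to constants depending only on $K$. Summing gives $\|v_h\|_A^2+\sum_k\|v_k\|_A^2\lesssim\|v\|_A^2$, i.e.~\eqref{stable} with $C_{\mathrm{stab}}\eqsim1$, and combining with the efficiency bound completes the proof of the corollary.
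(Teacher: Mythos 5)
Your proposal is correct and takes essentially the same route as the paper: it verifies \eqref{stable} with the decomposition $v=\Pi_h v+\sum_{k=1}^n\phi_k(v-\Pi_h v)$ built from a Cl\'ement-type quasi-interpolant and the partition of unity, then concludes via Theorem~\ref{mainthm} and Lemma~\ref{cond}. The only differences are presentational: you derive the summed bound from local approximation estimates on enlarged patches plus finite overlap where the paper directly postulates \eqref{approx}, and your explicit identification $\langle r,Sr\rangle=\sum_{k=1}^n\langle Q_kr,A_k^{-1}Q_kr\rangle$ and the remark that efficiency needs only the finite-overlap constant $M$ are useful clarifications the paper leaves implicit.
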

\begin{proof}
To verify \eqref{stable}, we take $v_h=\Pi_h v\in V_h$, where $\Pi_h$ is a $H^1$-stable interpolation which also enjoys standard approximation properties:
\begin{equation}\label{approx}
    |\Pi_hv|^2_{H^1(\Omega)}+\sum_{k=1}^nh_k^{-2}\|v-\Pi_h v\|^2_{L^2(\Omega_k)}+|v-\Pi_h v|^2_{H^1(\Omega_k)}\lesssim |v|^2_{H^1(\Omega)}.
\end{equation}
A simple choice for $\Pi_h$ is the Cl\'{e}ment interpolation ~\cite{clement-aa}.
We now set $v_k=\phi_k (v-\Pi_hv)$. Hence,
$v=v_h+\sum_{k=1}^nv_k$ is a decomposition. It follows from \eqref{gradphi} and \eqref{approx} that
  \begin{align*}
  &\|v_h\|_A^2+\sum_{k=1}^n\|v_k\|_A^2\eqsim|\Pi_hv|_{H^1(\Omega)}^2+\sum_{k=1}^n|\phi_k(v-\Pi_hv)|_{H^1(\Omega_k)}^2\\
  &\quad\lesssim|\Pi_hv|_{H^1(\Omega)}^2+\sum_{k=1}^nh_k^{-2}\|v-\Pi_hv\|_{L^2(\Omega_k)}^2+|v-\Pi_hv|_{H^1(\Omega_k)}^2\\
  &\quad\lesssim |v|_{H^1(\Omega)}^2\lesssim\|v\|_A^2.
\end{align*}
Hence, \eqref{stable} are verified.
Finally, we conclude Corollary \ref{maincor} from Theorem \ref{mainthm} and Lemma \ref{cond}.
\end{proof}
For $\varphi\in V_k,$ we have
$$
\langle Q_kr,\varphi\rangle=\int_{\Omega_k}f\varphi dx-a(u_h,\varphi).
$$
Hence, computing $\eta_k:=A_k^{-1}Q_kr\in V_k$ amounts to solving the variational problem:
\begin{equation}\label{ctslocal}
    a(\eta_k,\varphi)=\int_{\Omega_k}f\varphi dx-a(u_h,\varphi),\quad\forall\varphi\in V_k.
\end{equation}
Taking $\varphi=\eta_k$ in \eqref{ctslocal} implies that
\begin{equation*}
\|\eta_k\|^2_A=\langle Q_kr,A_k^{-1}Q_kr\rangle.
\end{equation*}
It then follows from the previous identity  and Corollary  \ref{maincor} that
\begin{equation}\label{etak}
\|e\|^2_A\eqsim\sum_{k=1}^n\|\eta_k\|^2_A.
\end{equation}

\subsection{Computable error estimator}
Unfortunately, $\|\eta_k\|_A$ is not available in practice because \eqref{ctslocal} is local but still not fully computable.
To implement the estimator in Corollary \ref{maincor}, we consider the approximate problem: Find $\widetilde{\eta}_k\in\widetilde{V}_k$ such that
\begin{equation}\label{dislocal}
    a(\widetilde{\eta}_k,\varphi)=\int_{\Omega_k}f\varphi dx-a(u_h,\varphi),\quad\forall\varphi\in\widetilde{V}_k,
\end{equation}
where $\widetilde{V}_k\subset V_k$ is a subspace of piecewise polynomials. Roughly speaking, the approximate estimator $\left(\sum_{k=1}^n\|\widetilde{\eta}_k\|^2_A\right)^\frac{1}{2}$ is expected to be an accurate upper and lower bound of $\|e\|_A$ provided the degree of piecewise polynomials in $\widetilde{V}_k$ is sufficiently high.

Taking $\varphi=\widetilde{\eta}_k$ in \eqref{dislocal} and \eqref{ctslocal}, we obtain
\begin{equation*}
    \|\widetilde{\eta}_k\|^2_A=a(\eta_k,\widetilde{\eta}_k)\leq\|\eta_k\|_A\|\widetilde{\eta}_k\|_A.
\end{equation*}
Combining the previous estimate with \eqref{etak} provides the following computable lower bound for the error
\begin{equation}\label{etatildek}
    \sum_{k=1}^n\|\widetilde{\eta}_k\|^2_A\leq\sum_{k=1}^n\|\eta_k\|^2_A\lesssim\|e\|_A^2.
\end{equation}

To derive a computable upper bound for $\|e\|_A$, let us first write the action of the residual on $V_k=H_0^1(\Omega_k)$. We denote the set of all $(d-1)$-dimensional faces in the triangulation $\mathcal{T}_h$ by $\mathcal{E}_h$. Clearly, $\mathcal{E}_h=\mathcal{E}_h^{o}\cup\mathcal{E}_h^{\partial}$ where $\mathcal{E}_h^\partial$ denotes the set of all boundary faces and $\mathcal{E}^o$ the interior faces. We further denote $\mathcal{T}_h|_{\Omega_k}$ by $\mathcal{T}_k$ and   $\mathcal{E}_h|_{\mathring{\Omega}_k}$ by $\mathcal{E}_k$, respectively. Note that $\mathcal{E}_k$ does not include the faces on $\partial\Omega_k$. For each $T\in\mathcal{T}_h$, let
\[
r_T:=(f+\operatorname{div}K\nabla u_h)|_T.
\]
Further, for each $e\in\mathcal{E}^o_h$, let $T_1, T_2\in\mathcal{T}_h$ be the two elements sharing $e$, $n_1$ (resp.~$n_2$) the outward unit normal to $\partial K_1$ (resp.~$\partial K_2$), and
\[
r_e:=K\nabla u_h|_{T_1}\cdot n_1+K\nabla u_h|_{T_2}\cdot n_2.
\]
It then follows from \eqref{ctslocal} and integration by parts that
\begin{equation}\label{rhsrep}
    a(\eta_k,\varphi)=\sum_{T\in\mathcal{T}_k}\int_Tr_T\varphi dx+\sum_{e\in\mathcal{E}_k}\int_er_e\varphi ds,\quad\forall\varphi\in V_k.
\end{equation}
Now, let us introduce the computable quantity
$$
\zeta_k:=\left(\sum_{T\in\mathcal{T}_k}h_T^2\|r_T\|^2_{L^2(T)}+\sum_{e\in\mathcal{E}_k}h_e\|r_e\|^2_{L^2(e)}\right)^\frac{1}{2},
$$
which is the standard explicit residual error estimator.  We take
$\varphi=\eta_k$ and use~\eqref{rhsrep} and the Cauchy--Schwarz
inequality to obtain that
\begin{equation*}
\begin{aligned}
\|\eta_k\|_A^2&=a(\eta_k,\eta_k)=\sum_{T\in\mathcal{T}_k}\int_Tr_T\eta_k dx+\sum_{e\in\mathcal{E}_k}\int_er_e\eta_k ds\\
&\leq\sum_{T\in\mathcal{T}_k}\|\eta_k\|_{L^2(T)}\|r_T\|_{L^2(T)}+\sum_{e\in\mathcal{E}_k}\|\eta_k\|_{L^2(e)}\|r_e\|_{L^2(e)}\\
&\leq\zeta_k\left(\sum_{T\in\mathcal{T}_k}h_T^{-2}\|\eta_k\|^2_{L^2(T)}+\sum_{e\in\mathcal{E}_k}h^{-1}_e\|\eta_k\|^2_{L^2(e)}\right)^\frac{1}{2}.
\end{aligned}
\end{equation*}
Finally, combining the previous inequality with the trace inequality and the Poincar\'{e} inequality
$\|\eta_k\|_{L^2(\Omega_k)}\lesssim h_k\|\nabla\eta_k\|_{L^2(\Omega_k)}$ yields
\begin{equation*}
\begin{aligned}
\|\eta_k\|_A^2\lesssim\zeta_k\left(h_k^{-2}\|\eta_k\|^2_{L^2(\Omega_k)}+\|\nabla\eta_k\|^2_{L^2(\Omega_k)}\right)^\frac{1}{2}\lesssim\zeta_k\|\eta_k\|_A.
\end{aligned}
\end{equation*}
Hence, using \eqref{etak} and the previous inequality, we obtain the following computable upper bound for the error
\begin{equation}\label{zetak}
    \|e\|_A^2\eqsim\sum_{k=1}^n\|\eta_k\|_A^2\lesssim\sum_{k=1}^n{\zeta}^2_k.
\end{equation}

So far the finite element error $\|e\|_A$ is estimated
  from below and above by two different estimators. To show that
  either
  $\left(\sum_{k=1}^n\|\widetilde{\eta}_k\|_A^2\right)^\frac{1}{2}$ or
  $\left(\sum_{k=1}^n{\zeta}^2_k\right)^\frac{1}{2}$ is a
  \emph{two-sided} bound of $\|e\|_A$, we use the bubble function
  technique due Verf\"urth, which seems to be indispensible tool in
  deriving such estimates. To keep the presentation self-contained as
  much as possiblle, we give the details of deriving the two-sided
  estimates below and we note that such arguments are standard, see,
  e.g., \cite{Verfurth1996}. For each $T\in\mathcal{T}_h$ and
  $e\in\mathcal{E}_h$, the volume and face bubble functions are
  defined as
\begin{align*}
    \phi_T:=\prod_{x_k\in\overline{T}}\phi_k,\quad\phi_e:=\prod_{x_k\in\overline{e}}\phi_k,
\end{align*}
respectively. Let $\Omega_e$ denote the union of elements sharing $e$ as a face. We note that $\|\phi_T\|_{L^\infty(T)}\eqsim1,$ $\|\phi_e\|_{L^\infty(\Omega_e)}\eqsim1,$ and $\text{supp}\phi_T\subseteq T,$ $\text{supp}\phi_e\subseteq\Omega_e$. Given an integer $m\geq 0$,
it is well-known (see~\cite{Verfurth1996}) that for $v\in\mathcal{P}_m(T)$ and $w\in\mathcal{P}_m(e)$, we have the following estimates:
\begin{subequations}
\begin{align}
    &\|\phi_Tv\|_T\lesssim\|v\|_T\lesssim\|\phi_T^\frac{1}{2}v\|_T,\label{phiT}\\
    &\|\phi_ew\|_e\lesssim\|w\|_e\lesssim\|\phi_e^\frac{1}{2}w\|_e,\label{phie1}\\
    &\|E_ew\|_{\Omega_e}\eqsim  h_e^\frac{1}{2}\|w\|_e,\label{phie2}
\end{align}
\end{subequations}
where $E_ew\in\mathcal{P}_{m}(\Omega_e)$ is an extension of $w$, such that  $(E_ew)|_e=w$.

To show that $\sum_{k=1}^n\|\widetilde{\eta}_k\|_A^2$ is an upper bound for $\|e\|_A^2$, we take $\widetilde{V}_k$ in \eqref{dislocal} as
\begin{align*}
    \widetilde{V}_k:=\sum_{T\in\mathcal{T}_k}\phi_T\mathcal{P}_{p-1}(\Omega_k)+\sum_{e\in\mathcal{E}_k}\phi_e\mathcal{P}_{p-1}(\Omega_k),
\end{align*}
which is clearly a subspace of $V_k$.
Similarly to \eqref{rhsrep}, one can rewrite \eqref{dislocal} as
\begin{equation}\label{rhsrep2}
    a(\widetilde{\eta}_k,\varphi)=\sum_{T\in\mathcal{T}_k}\int_Tr_T\varphi dx+\sum_{e\in\mathcal{E}_k}\int_er_e\varphi ds,\quad\forall\varphi\in\widetilde{V}_k.
\end{equation}
Let $Q_T$ denote the $L^2$-projection onto $\mathcal{P}_{p-1}(T)$.
Using \eqref{phiT}, \eqref{rhsrep2} with $\varphi=\phi_TQ_Tr_T\in\widetilde{V}_k$, the Cauchy--Schwarz and inverse inequalities, we have
\begin{equation}\label{intervol}
\begin{aligned}
\|Q_Tr_T\|^2_T&\lesssim\int_Tr_T\varphi dx+\int_T(Q_Tr_T-r_T)\varphi dx\\
&=a(\widetilde{\eta}_k,\varphi)+\int_T(Q_Tr_T-r_T)\varphi dx\\
&\lesssim  h_T^{-1}\|\widetilde{\eta}_k\|_A\|\varphi\|_T+\|Q_Tr_T-r_T\|_T\|\varphi\|_T\\
&\lesssim\big(  h_T^{-1}\|\widetilde{\eta}_k\|_A+\|Q_Tr_T-r_T\|_T\big)\|Q_Tr_T\|_T.
\end{aligned}
\end{equation}
It then follows from \eqref{intervol},   \eqref{phiT}, and $(\text{id}-Q_T)(\text{div}K\nabla u_h)=0$ that
\begin{equation}\label{vol}
\begin{aligned}
\|r_T\|_T&\leq\|Q_Tr_T\|_T+\|r_T-Q_Tr_T\|_T\\
    &\lesssim h_T^{-1}\|\widetilde{\eta}_k\|_A+\|f-Q_Tf\|_T.
\end{aligned}
\end{equation}
On the other hand, taking $\varphi=\phi_eE_er_e\in\widetilde{V}_k$ in \eqref{rhsrep2} and using  \eqref{phie1}, \eqref{phie2}, we have
\begin{equation}\label{edge}
\begin{aligned}
\|r_e\|^2_e&\lesssim\int_er_e\varphi ds=a(\widetilde{\eta}_k,\varphi)-\sum_{T\in\mathcal{T}_k, T\subset\Omega_e}\int_Tr_T\varphi dx\\
&\lesssim  h_e^{-1}\|\widetilde{\eta}_k\|_A\|\varphi\|_{\Omega_e}+\sum_{T\in\mathcal{T}_k,T\subset\Omega_e}\|r_T\|_T\|\varphi\|_T\\
&\lesssim  \big(h_e^{-\frac{1}{2}}\|\widetilde{\eta}_k\|_A+\sum_{T\in\mathcal{T}_k,T\subset\Omega_e}h_T^\frac{1}{2}\|r_T\|_T\big)\|r_e\|_e.
\end{aligned}
\end{equation}
Hence, combining \eqref{zetak}, \eqref{vol}, \eqref{edge} and using the shape regularity of $\mathcal{T}_h$, we obtain the computable upper bound based on $\widetilde{\eta}_k$:
\begin{equation*}
    \|e\|_A^2\lesssim\sum_{k=1}^n\|\zeta_k\|^2_A\lesssim\sum_{k=1}^n\|\widetilde{\eta}_k\|^2_A+\text{osc}_{\mathcal{T}_h}(f)^2,
\end{equation*}
where
$\text{osc}_{\mathcal{T}_h}(f):=\big(\sum_{T\in\mathcal{T}_h}h_T^2\|f-Q_Tf\|_T^2\big)^\frac{1}{2}$ is called the data oscillation in the literature. Compared with $\|e\|_A,$ the quantity $\text{osc}_{\mathcal{T}_h}(f)$ is a higher order term provided $f$ is piecewise smooth.

Similarly, using \eqref{rhsrep} with $\varphi=\phi_TQ_Tr_T$, $\varphi=\phi_eE_er_e$, and \eqref{etak} we obtain
\begin{align*}
    \sum_{k=1}^n\|\zeta_k\|^2_A\lesssim\sum_{k=1}^n\|\eta_k\|^2_A+\text{osc}_{\mathcal{T}_h}(f)^2\lesssim\|e\|_A^2+\text{osc}_{\mathcal{T}_h}(f)^2,
\end{align*}
which is a lower bound based on $\zeta_k$.


\section{Concluding remarks}\label{secconclusion}
For SPD problems, we have shown how preconditioning can be used to derive a posteriori error estimates. Extensions of this abstract theoretical framework and its application to derive estimators for indefinite, nonconforming, and discontinuous Galerkin methods are ongoing. A close inspection of the arguments shows that not only preconditioning can give a unified way to derive a posteriori error estimators. This is a two-way street: the a posteriori error estimators may provide efficient smoothers for multilevel methods. For example, the operator $S$ we have introduced in our framework is a clear analogue of smoothing (relaxation) operator. We hope that some of the error indicators and estimators may give efficient smoothers in case of non-symmetric and or indefinite problems which are, in general, hard to precondition.


\begin{thebibliography}{10}

\bibitem{AO2000}
Mark Ainsworth and J.~Tinsley Oden.
\newblock {\em A posteriori error estimation in finite element analysis}.
\newblock Pure and Applied Mathematics (New York). Wiley-Interscience [John
  Wiley \& Sons], New York, 2000.

\bibitem{BR1978}
I.~Babu\v{s}ka and W.~C. Rheinboldt.
\newblock Error estimates for adaptive finite element computations.
\newblock {\em SIAM J. Numer. Anal.}, 15(4):736--754, 1978.

\bibitem{BS2001}
Ivo Babu\v{s}ka and Theofanis Strouboulis.
\newblock {\em The finite element method and its reliability}.
\newblock Numerical Mathematics and Scientific Computation. The Clarendon
  Press, Oxford University Press, New York, 2001.

\bibitem{BankWeiser}
R.~E. Bank and A.~Weiser.
\newblock Some a posteriori error estimators for elliptic partial differential
  equations.
\newblock {\em Math. Comp.}, 44(170):283--301, 1985.

\bibitem{bank-acta}
Randolph~E. Bank.
\newblock Hierarchical bases and the finite element method.
\newblock In {\em Acta numerica, 1996}, volume~5 of {\em Acta Numer.}, pages
  1--43. Cambridge Univ. Press, Cambridge, 1996.

\bibitem{BankSmith}
Randolph~E. Bank and R.~Kent Smith.
\newblock A posteriori error estimates based on hierarchical bases.
\newblock {\em SIAM J. Numer. Anal.}, 30(4):921--935, 1993.

\bibitem{BX2003a}
Randolph~E. Bank and Jinchao Xu.
\newblock Asymptotically exact a posteriori error estimators. {I}. {G}rids with
  superconvergence.
\newblock {\em SIAM J. Numer. Anal.}, 41(6):2294--2312, 2003.

\bibitem{BX2003b}
Randolph~E. Bank and Jinchao Xu.
\newblock Asymptotically exact a posteriori error estimators. {II}. {G}eneral
  unstructured grids.
\newblock {\em SIAM J. Numer. Anal.}, 41(6):2313--2332, 2003.

\bibitem{BDD2004}
Peter Binev, Wolfgang Dahmen, and Ron DeVore.
\newblock Adaptive finite element methods with convergence rates.
\newblock {\em Numer. Math.}, 97(2):219--268, 2004.

\bibitem{Schoberl2008b}
Dietrich Braess and Joachim Sch\"{o}berl.
\newblock Equilibrated residual error estimator for edge elements.
\newblock {\em Math. Comp.}, 77(262):651--672, 2008.

\bibitem{achi}
Achi Brandt.
\newblock Multi-level adaptive technique ({M}{L}{A}{T}) for fast numerical
  solution to boundary value problems.
\newblock In {\em Proc. 3rd Internat. Conf. on Numerical Methods in Fluid
  Mechanics (Paris, 1972)}, volume~18 of {\em Lecture Notes in Physics}, pages
  82--89, Berlin, New York, 1973. Springer-Verlag.

\bibitem{BS2008}
Susanne~C. Brenner and L.~Ridgway Scott.
\newblock {\em The mathematical theory of finite element methods}, volume~35 of
  {\em Texts in Applied Mathematics, 15}.
\newblock Springer, New York, 3 edition, 2008.

\bibitem{CB2002}
Carsten Carstensen and S\"{o}ren Bartels.
\newblock Each averaging technique yields reliable a posteriori error control
  in {FEM} on unstructured grids. {I}. {L}ow order conforming, nonconforming,
  and mixed {FEM}.
\newblock {\em Math. Comp.}, 71(239):945--969, 2002.

\bibitem{CF1999}
Carsten Carstensen and Stefan~A. Funken.
\newblock Fully reliable localized error control in the {FEM}.
\newblock {\em SIAM J. Sci. Comput.}, 21(4):1465--1484, 1999/00.

\bibitem{CKNS2008}
J.~Manuel Cascon, Christian Kreuzer, Ricardo~H. Nochetto, and Kunibert~G.
  Siebert.
\newblock Quasi-optimal convergence rate for an adaptive finite element method.
\newblock {\em SIAM J. Numer. Anal.}, 46(5):2524--2550, 2008.

\bibitem{clement-aa}
Ph. Cl\'{e}ment.
\newblock Approximation by finite element functions using local regularization.
\newblock {\em Rev. Fran\c{c}aise Automat. Informat. Recherche
  Op\'{e}rationnelle S\'{e}r.}, 9({\rm R}-2):77--84, 1975.

\bibitem{Dorfler1996}
Willy D\"{o}rfler.
\newblock A convergent adaptive algorithm for {P}oisson's equation.
\newblock {\em SIAM J. Numer. Anal.}, 33(3):1106--1124, 1996.

\bibitem{dryja-widlund}
Maksymilian Dryja and Olof~B. Widlund.
\newblock An additive variant of the {S}chwarz alternating method for the case
  of many subregions.
\newblock Technical Report 339, also Ultracomputer Note 131, Department of
  Computer Science, Courant Institute, 1987.

\bibitem{Ern2015}
Alexandre Ern and Martin Vohral\'ik.
\newblock Polynomial-degree-robust a posteriori estimates in a unified setting
  for conforming, nonconforming, discontinuous galerkin, and mixed
  discretizations.
\newblock {\em SIAM J. Numer. Anal.}, 53(2):1058--1081, 2015.

\bibitem{griebel-oswald}
M.~Griebel and P.~Oswald.
\newblock On the abstract theory of additive and multiplicative {S}chwarz
  algorithms.
\newblock {\em Numer. Math.}, 70(2):163--180, 1995.

\bibitem{hackbusch-first}
W.~Hackbusch.
\newblock A fast iterative method for solving {P}oisson's equation in a general
  region.
\newblock In {\em Numerical treatment of differential equations ({P}roc.
  {C}onf., {M}ath. {F}orschungsinst., {O}berwolfach, 1976)}, pages 51--62.
  Lecture Notes in Math., Vol. 631. Springer, Berlin, 1978.
\newblock Longer version: Ein Iteratives Verfahren zur Schnellen Aufl\"{o}sung
  Elliptischer Randwertprobleme, Math. Inst., Universit\"{a}t zu K\"{o}ln,
  Report 76-12 (November 1976).

\bibitem{hackbusch-book}
Wolfgang Hackbusch.
\newblock {\em Multigrid methods and applications}, volume~4 of {\em Springer
  Series in Computational Mathematics}.
\newblock Springer-Verlag, Berlin, 1985.

\bibitem{HNJ2017}
Harri Hakula, Michael Neilan, and Jeffrey~S. Ovall.
\newblock A posteriori estimates using auxiliary subspace techniques.
\newblock {\em J. Sci. Comput.}, 72(1):97--127, 2017.

\bibitem{LiOvall}
Hengguang Li and Jeffrey~S. Ovall.
\newblock A posteriori error estimation of hierarchical type for the
  {S}chr\"{o}dinger operator with inverse square potential.
\newblock {\em Numer. Math.}, 128(4):707--740, 2014.

\bibitem{LoghinWathen2004}
D.~Loghin and A.~J. Wathen.
\newblock Analysis of preconditioners for saddle-point problems.
\newblock {\em SIAM J. Sci. Comput.}, 25(6):2029--2049 (electronic), 2004.

\bibitem{LW2004}
R.~Luce and B.~I. Wohlmuth.
\newblock A local a posteriori error estimator based on equilibrated fluxes.
\newblock {\em SIAM J. Numer. Anal.}, 42(4):1394--1414, 2004.

\bibitem{MardalWinther2011}
Kent-Andre Mardal and Ragnar Winther.
\newblock Preconditioning discretizations of systems of partial differential
  equations.
\newblock {\em Numer. Linear Algebra Appl.}, 18(1):1--40, 2011.

\bibitem{MNS2000}
Pedro Morin, Ricardo~H. Nochetto, and Kunibert~G. Siebert.
\newblock Data oscillation and convergence of adaptive {FEM}.
\newblock {\em SIAM J. Numer. Anal.}, 38(2):466--488, 2000.

\bibitem{MNS2003}
Pedro Morin, Ricardo~H. Nochetto, and Kunibert~G. Siebert.
\newblock Local problems on stars: a posteriori error estimators, convergence,
  and performance.
\newblock {\em Math. Comp.}, 72(243):1067--1097, 2003.

\bibitem{sergey-fictitious-space}
S.~V. Nepomnyaschikh.
\newblock Decomposition and fictitious domains methods for elliptic boundary
  value problems.
\newblock In {\em Fifth {I}nternational {S}ymposium on {D}omain {D}ecomposition
  {M}ethods for {P}artial {D}ifferential {E}quations ({N}orfolk, {VA}, 1991)},
  pages 62--72. SIAM, Philadelphia, PA, 1992.

\bibitem{NSV2009}
Ricardo~H. Nochetto, Kunibert~G. Siebert, and Andreas Veeser.
\newblock {\em Theory of adaptive finite element methods: an introduction},
  pages 409--542.
\newblock Springer, Berlin, 2009.

\bibitem{Repin2008}
Sergey Repin.
\newblock {\em A posteriori estimates for partial differential equations},
  volume~4 of {\em Radon Series on Computational and Applied Mathematics}.
\newblock Walter de Gruyter GmbH \& Co. KG, Berlin, 2008.

\bibitem{Stevenson2007}
Rob Stevenson.
\newblock Optimality of a standard adaptive finite element method.
\newblock {\em Found. Comput. Math.}, 7(2):245--269, 2007.

\bibitem{widlund-book}
Andrea Toselli and Olof Widlund.
\newblock {\em Domain decomposition methods---algorithms and theory}, volume~34
  of {\em Springer Series in Computational Mathematics}.
\newblock Springer-Verlag, Berlin, 2005.

\bibitem{MG-book}
U.~Trottenberg, C.~W. Oosterlee, and A.~Sch{\"u}ller.
\newblock {\em Multigrid}.
\newblock Academic Press, Inc., San Diego, CA, 2001.
\newblock With contributions by A. Brandt, P. Oswald and K. St{\"u}ben.

\bibitem{Verfurth1996}
R\"udiger Verf\"{u}rth.
\newblock {\em A review of a posteriori error estimation and adaptive
  mesh-refinement techniques}.
\newblock advances in numerical mathematics. Wiley-Teubner, 1996.

\bibitem{Verfurth2013}
R\"{u}diger Verf\"{u}rth.
\newblock {\em A posteriori error estimation techniques for finite element
  methods}.
\newblock Numerical Mathematics and Scientific Computation. Oxford University
  Press, Oxford, 2013.

\bibitem{xu-siam-review}
Jinchao Xu.
\newblock Iterative methods by space decomposition and subspace correction.
\newblock {\em SIAM Rev.}, 34(4):581--613, 1992.

\bibitem{xz-jams}
Jinchao Xu and Ludmil Zikatanov.
\newblock The method of alternating projections and the method of subspace
  corrections in {H}ilbert space.
\newblock {\em J. Amer. Math. Soc.}, 15(3):573--597, 2002.

\bibitem{xz-AMG}
Jinchao Xu and Ludmil Zikatanov.
\newblock Algebraic multigrid methods.
\newblock {\em Acta Numer.}, 26:591--721, 2017.

\bibitem{yserentant-acta}
Harry Yserentant.
\newblock Old and new convergence proofs for multigrid methods.
\newblock In {\em Acta numerica, 1993}, Acta Numer., pages 285--326. Cambridge
  Univ. Press, Cambridge, 1993.

\bibitem{ZhangNaga2005}
Zhimin Zhang and Ahmed Naga.
\newblock A new finite element gradient recovery method: superconvergence
  property.
\newblock {\em SIAM J. Sci. Comput.}, 26(4):1192--1213, 2005.

\bibitem{ZZ1992b}
O.~C. Zienkiewicz and J.~Zhu.
\newblock The superconvergent patch recovery and a posteriori error estimates.
  {II}. error estimates and adaptivity.
\newblock {\em Internat. J. Numer. Methods Engrg.}, 33(7):1365--1382, 1992.

\end{thebibliography}
\end{document}